\documentclass[a4paper,11pt]{article}
\usepackage{amsfonts,amsmath,amssymb,amsthm,verbatim,placeins,xcolor,comment,placeins,caption,subcaption,enumerate,bm,graphicx}
\usepackage{geometry}
\geometry{height=22.5cm,left=2cm,right=4cm,marginparwidth=3.5cm}
\usepackage[pdfauthor={DG},pdfstartview={FitH},colorlinks=true,citecolor=blue]{hyperref} 
\usepackage[english]{babel}
\usepackage[sort&compress]{natbib}
\usepackage{pgfplots}

\newtheorem{theorem}{Theorem}[section]
\newtheorem{prop}{Proposition}[section]
\newtheorem{lemma}{Lemma}[section]

\theoremstyle{remark}

\theoremstyle{definition}

\theoremstyle{remark}
\newtheoremstyle{myremark}{}{}{\color{blue}\small}{}{\color{blue}\bfseries}{}{ }{}

\theoremstyle{myremark}

\newcommand{\E}{\mathbb{E}} 
\newcommand{\R}{{\mathbb R}}
\newcommand{\N}{{\mathbb N}}

\newcommand\1{\mathbf{1}}

\newcommand{\RV}{\operatorname{RV}}
\newcommand{\hX}{\widehat{X}}
\newcommand{\hPsi}{\widehat{\Psi}}

\usepackage{marginnote}

\begin{document}

\renewcommand*{\thefootnote}{\fnsymbol{footnote}}

\begin{center}
\Large{\textbf{Intermittency in the small-time behavior of L\'evy processes}}\\
\bigskip
\bigskip
Danijel Grahovac$^1$\footnote{dgrahova@mathos.hr}\\
\end{center}

\bigskip
\begin{flushleft}
\footnotesize{
$^1$ Department of Mathematics, University of Osijek, Trg Ljudevita Gaja 6, 31000 Osijek, Croatia}
\end{flushleft}

\bigskip

\textbf{Abstract: } In this paper we consider convergence of moments in the small-time limit theorems for L\'evy processes. We provide precise asymptotics for all the absolute moments of positive order. The convergence of moments in limit theorems holds typically only up to some critical moment order and higher order moments decay at different rate. Such behavior is known as intermittency and has been encountered in some limit theorems. 

\medskip

\textbf{Keywords: } small-time limit theorems; L\'evy processes; absolute moments; intermittency

\bigskip

\section{Introduction}
In classical limit theorems, one typically studies the behavior of some form of aggregated process as time tends to infinity. For example, if $\{\xi_i, \, i \in \N\}$ is an i.i.d.~sequence and $(a_\lambda) \subset \R$, $a_\lambda\to \infty$, then it is well known that the class of weak limits of
\begin{equation}\label{e:intr1}
\left\{ \frac{1}{a_\lambda} \sum_{i=1}^{\lfloor \lambda t \rfloor} \xi_i, \, t \geq 0 \right\}, \quad \text{ as } \lambda\to \infty,
\end{equation}
coincides with the class of self-similar L\'evy processes, namely, Brownian motion or infinite variance strictly stable processes. If $\xi_i$ are infinitely divisible, then one-dimensional marginals of \eqref{e:intr1} may be identified with $a_\lambda^{-1} X(\lfloor \lambda t \rfloor)$ for a L\'evy process $X$ with increments distributed as $\xi_i$. The type of the limiting process depends solely on the tail behavior of the distribution of $\xi_i$. If $\xi_i$ is infinitely divisible, then this behavior is related to the behavior of the L\'evy measure of $\xi_i$ at infinity.

Alternatively, one may consider limit theorems for a small-time limiting scheme (or ``zooming in'' as termed by \cite{ivanovs2018zooming}), where one considers for a L\'evy process $X$ the limit
\begin{equation}\label{e:intr2}
\left\{ \frac{1}{a_\lambda} X (\lambda t ), \, t \geq 0 \right\}, \quad \text{ as } \lambda\to 0.
\end{equation}
Although the class of possible limits is the same as for \eqref{e:intr1}, the domains of attraction now crucially depend on the behavior of the L\'evy measure of $X$ near zero (see  \cite{doney2002stability,maller2008convergence,ivanovs2018zooming}).

In this paper we investigate small-time behavior of absolute moments of L\'evy processes. Assuming that \eqref{e:intr2} converges to some non-trivial process $\hX$, we show that the normalized absolute moments $a_\lambda^{-q} \E |X(\lambda t)|^q$ converge to the absolute moments of the limit, but this typically holds only up to some critical moment order $\alpha$. Absolute moments of the order $q$ greater than $\alpha$ cannot be normalized with $a_\lambda^{q}$. This implies that the $L^q$ norms of $X(\lambda t)$ decay at different rates for different range of $q$. Such a behavior is known as intermittency (see \cite{GLST2016JSP,GLST2019Bernoulli}) and resembles a similar phenomenon appearing in solutions of some stochastic partial differential equations (SPDE) (see e.g.~\cite{carmona1994parabolic,gartner2007geometric,zel1987intermittency,khoshnevisan2014analysis,chong2018almost,chong2019intermittency}). Intermittency in the SPDEs usually involves long-term behavior of solutions corresponding to $\lambda \to \infty$ and in limit theorems one is likewise interested in the limits of aggregated processes as $\lambda \to \infty$ (see e.g.~\cite{GLT2019Limit,GLT2019LimitInfVar,GLT2019MomInfVar,grahovac2018intermittency}). Here, however, we find an instance of intermittent behavior appearing in the small-time limit.

We note that there exists a large body of literature related to the small-time behavior of L\'evy processes: \cite{bertoin2008passage,maller2009small,maller2008convergence,doney2002stability,aurzada2013small,maller2015strong,ivanovs2018zooming}. Small-time behavior of moments of L\'evy processes has been investigated in \cite{figueroa2008small} (see also \cite{woerner2003variational,jacod2007asymptotic,asmussen2001approximations}). However, these results apply only to moments of the order greater than the Blumenthal-Getoor index of the process. We provide here the asymptotics for the full range of positive order moments provided that \eqref{e:intr2} converges to a non-trivial limit. In \cite{deng2015shift}, bounds are established for absolute moments of general L\'evy processes applicable both for small and large time.

The paper is organized as follows. We start with some preliminaries in Section \ref{sec2} and discuss the notion of intermittency in small-time in Section \ref{sec3}. In Section \ref{sec4} we establish asymptotic behavior of moments. In Section \ref{sec5} we shortly discuss the so-called multifractal formalism which relates behavior of moments with the sample path properties of the process.

\section{Preliminaries}\label{sec2}

Through the paper, $X=\{X(t), \, t \geq 0\}$ will denote a L\'evy process. Let $\Psi$ be its characteristic exponent $\log \E \left[ e^{i \zeta X(t)} \right] = t \Psi (\zeta)$ for $t\geq 0$, $\zeta \in \R$, which by the L\'evy-Khintchine formula has the following form
\begin{equation}\label{LKformula}
\Psi(\zeta) = i \gamma \zeta - \frac{\sigma^2}{2} \zeta^2 + \int_{\R} \left(e^{i\zeta x} - 1 - i \zeta x \mathbf{1}_{\{|x|\leq 1\}} \right) \Pi(dx),
\end{equation}
with $\gamma \in \R$, $\sigma>0$ and $\Pi$ a measure on $\R \backslash \{0\}$ such that $\int (1\wedge x^2) \Pi(dx) < \infty$. We refer to $(\gamma,\sigma,\Pi)$ as the characteristic triplet. The L\'evy process has paths of bounded variation if and only if $\sigma=0$ and $\int_{|x|\leq 1}|x| \Pi(dx)<\infty$. If $\int_{|x|\leq 1}|x| \Pi(dx)<\infty$, then \eqref{LKformula} may be written in the form
\begin{equation}\label{LKformulabv}
\Psi(\zeta) = i \gamma' \zeta - \frac{\sigma^2}{2} \zeta^2 + \int_{\R} \left(e^{i\zeta x} - 1\right) \Pi(dx),
\end{equation}
and $\gamma'$ will be referred to as drift. For more details see \cite{sato1999levy,kyprianou2014fluctuations,bertoin1998levy}.

Let $\{Y(t), \, t\geq 0\}$ be a general real-valued process. It is said to be self-similar with index $H>0$ if for any $c>0$ it holds that $\{Y(ct)\} \overset{d}{=} \{c^H Y(t)\}$, where $\{\cdot\} \overset{d}{=} \{\cdot\}$ denotes the equality of the finite dimensional distributions of two processes. If $X$ is a self-similar L\'evy process, then $X$ is either
\begin{itemize}
\item Brownian motion in which case $H=1/2$ and the characteristic triplet is $(0,\sigma,0)$,
\item linear drift in which case $H=1$ and the characteristic triplet is $(\gamma,0,0)$, $\gamma\neq 0$,
\item strictly $\alpha$-stable L\'evy process with $0<\alpha<2$ in which case $H=1/\alpha$ and the characteristic triplet is $(\gamma,0,\Pi)$ with
\begin{equation*}
\Pi(dx)=c_+ x^{-1-\alpha} \1_{\{x>0\}} dx + c_- |x|^{-1-\alpha} \1_{\{x<0\}} dx,
\end{equation*}
for some $c_+,c_- \geq0$, $c_++c_->0$ and, additionally, $c_+=c_-$ if $\alpha=1$, while $\gamma$ is given by $\gamma = (c_+-c_-)/(1-\alpha)$.
\end{itemize}
It follows from Lamperti's theorem (see \cite{lamperti1962semi}, \cite[Thm.~2.8.5]{pipiras2017long}) that the class of self-similar L\'evy processes coincides with the class of weak limits of $\{\sum_{i=1}^{\lfloor \lambda t \rfloor} \xi_i/a(\lambda), \, t \geq 0\}$ as $\lambda\to \infty$, where $\{\xi_i, \, i \in \N\}$ is an i.i.d.~sequence and $a(\lambda)$ is a nonstochastic function such that $a(\lambda) \to \infty$ as $\lambda \to \infty$. By \cite[Theorem 16.14]{kallenberg2002foundations}, a necessary and sufficient coefficient for such convergence to some L\'evy process $\widehat{X}=\{\widehat{X}(t), \, t \geq 0\}$ is that $\sum_{i=1}^{n} \xi_i/a_n \to^d \widehat{X}(1)$. The characterization of domains of attraction follows from \cite[Thm.~7.35.2]{gnedenko1954limit}, \cite{shimura1990strict} and \cite{ivanovs2018zooming} for the convergence to a linear drift.

In a small-time limiting scheme (or ``zooming in'' as termed by \cite{ivanovs2018zooming}) one considers the limit
\begin{equation}\label{e:limscheme}
\left\{  \frac{Y(ts)}{a(t)}, \, s \geq 0 \right\} \overset{fdd}{\to} \{Z(s), \, s \geq 0\}, \quad \text{ as } t \to 0,
\end{equation}
for some processes $\{Y(s), \, s \geq 0\}$, $\{Z(s), \, s \geq 0\}$, a nonstochastic function $a(t)$ such that $a(t)\to 0$ as $t\to 0$, and with convergence in the sense of convergence of all finite-dimensional distributions. By adapting the proof of Lamperti's theorem, one can show that if \eqref{e:limscheme} holds with $Z$ stochastically right-continuous and non-trivial (not identically $0$), then $Z$ must be self-similar with some index $H>0$ and $a(t)$ is regularly varying at zero with index $H$, which we will denote by $a(t)\in \RV^0(H)$ (see \cite[Thm.~1]{ivanovs2018zooming}).

\section{Small-time intermittency}\label{sec3}
Intermittency typically refers to an unusual asymptotic behavior of moments. In \cite{GLST2019Bernoulli}, intermittency is defined as a change in the rate of growth of the $L^q$ norm of the process as time tends to $\infty$. More precisely, for a general real-valued process $Y=\{Y(t),\, t \geq 0\}$ define the scaling function at point $q \in \R$ as
\begin{equation}\label{deftauInf}
\tau^\infty(q) = \tau_Y^\infty(q) = \lim_{t\to \infty} \frac{\log \E |Y(t)|^q}{\log t}.
\end{equation}
The process $Y$ is then intermittent if 
\begin{equation*}
q \mapsto \frac{\tau_Y^\infty(q)}{q} = \lim_{t\to \infty} \frac{\log  \left\lVert Y(t) \right\rVert_q}{\log t},
\end{equation*}
has points of strict increase, where $\left\lVert Y(t) \right\rVert_q= \left(  \E |Y(t)|^q \right)^{1/q}$, which is the $L^q$ norm if $q\geq 1$. The scaling function \eqref{deftauInf} is tailored for measuring the rate of growth of moments as time tends to $\infty$ of the process whose moments grow roughly as a power function of time. For $H$-self-similar process, for example, $\tau^\infty(q)=Hq$ for $q$ in the range of finite moments.

In the limiting scheme \eqref{e:limscheme}, clearly $Y(t) \to^P 0$ as $t \to 0$. In this setting we want to measure the rate of decay of absolute moments to zero as $t\to 0$ in \eqref{deftauInf}. Hence, we define the scaling function as
\begin{equation}\label{deftau0}
\tau^0(q) = \tau_Y^0(q) = \lim_{t\to 0} \frac{\log \E |Y(t)|^q}{\log (1/t)},
\end{equation}
where we assume the limit exists, possibly equal to $\infty$. If $\E|Y(t)|^q=\infty$ for $t\geq t_0$, then $\tau^0(q)=\infty$. If $Y$ is $H$-self-similar process, then $\tau^0(q)=-Hq$ for $q$ in the range of finite moments.

We divide by $\log(1/t)$ instead of $\log t$ in \eqref{deftau0} for convenience. Namely, we have that $\tau_Y^0(q)=\tau_{Y'}^\infty(q)$, where $\tau_{Y'}^\infty$ is the scaling function \eqref{deftauInf} of the process $Y'(t)=Y(1/t)$. From this fact and \cite[Prop.~2.1]{GLST2016JSP}, we immediately get that $\tau^0$ is convex, hence continuous. Moreover, $q \mapsto \frac{\tau^0(q)}{q}$ is nondecreasing on $\mathcal{D}_{\tau^0}=\{q \in \R : \tau^0(q)<\infty\}$.

Following \cite{GLST2016JSP,GLST2019Bernoulli}, we say that the process $\{Y(t), \, t \geq 0\}$ with the scaling function $\tau^0$ given by \eqref{deftau0} is intermittent (in the small-time limit) if there exist $p, r \in \mathcal{D}_{\tau^0}$ such that
\begin{equation*}
    \frac{\tau^0(p)}{p} < \frac{\tau^0(r)}{r}.
\end{equation*}
The following proposition explains some implications of intermittency related to limit theorems. The proof follows directly from \cite[Thm.~1]{GLST2019Bernoulli} and Lamperti's theorem for limiting scheme \eqref{e:limscheme} \cite[Thm.~1]{ivanovs2018zooming}.

\begin{prop}\label{prop:limitLamp}
Suppose that $Y$ satisfies \eqref{e:limscheme} for some  stochastically right-continuous and non-trivial process $Z$. Then $Z$ is $H$-self-similar and the scaling function \eqref{deftau0} of $Y$ is $\tau_Y^0(q)=-Hq$ for every $q$ such that, as $t\to 0$
\begin{equation}\label{e:limitmom}
a(t)^{-q} \E| Y(ts)|^q  \to \E |Z(s)|^q, \quad \forall s \geq 0.
\end{equation}
\end{prop}

Proposition \ref{prop:limitLamp} shows that for intermittent processes obeying limit theorem in the sense of \eqref{e:limscheme}, the convergence of moments as in \eqref{e:limitmom} must fail to hold for some range of $q$.

\section{Small-time moment asymptotics of L\'evy processes}\label{sec4}
Domains of attraction in the limiting scheme \eqref{e:limscheme} have been characterized in \cite{doney2002stability,maller2008convergence,ivanovs2018zooming}. Suppose $X$ is a L\'evy process with characteristic exponent $\Psi$ such that
\begin{equation}\label{e:LPlimit}
\left\{ \frac{X(t s)}{a(t)}, \, s \geq 0 \right\} \overset{fdd}{\to} \{\hX(s), \, s \geq 0\}, \quad \text{ as } t \to 0.
\end{equation}
Then $\hX$ must be a L\'evy process too with some characteristic exponent $\hPsi$, the convergence extends to convergence in Skorokhod space of c\`adl\`ag functions \cite[Cor.~VII.3.6]{jacod1987limit} and is equivalent to $a(t)^{-1} X(t) \overset{d}{\to} \hX(1)$, as $t \to 0$, which in turn is equivalent to
\begin{equation}\label{e:psiconv}
t \Psi(a(t)^{-1} \zeta) \to \hPsi(\zeta), \quad \text{ as } t \to 0, \ \forall \zeta \in \R.
\end{equation}
Moreover, if $\hX$ is non-trivial, then it is a $H$-self-similar L\'evy process for some $H\in [1/2,2]$ and $a(t) \in \RV^0(H)$. In \cite[Thm.~2]{ivanovs2018zooming}, a complete characterization of domains of attraction is provided in terms of the characteristic triplet $(\gamma, \sigma, \Pi)$ of the L\'evy process $X$. Some sufficient conditions can also be found in \cite[Prop.~2.3]{bisewski2019zooming}.

In the following we denote the characteristic triplet of $X$ by $(\gamma, \sigma, \Pi)$, the characteristic triplet of $\hX$ by $(\widehat{\gamma}, \widehat{\sigma}, \widehat{\Pi})$ and we assume \eqref{e:LPlimit} holds. We also define indices
\begin{align}
\beta^0 &:= \inf \left\{ \beta \geq 0 : \int_{|x|\leq 1} |x|^\beta \Pi(dx) < \infty \right\},\label{beta0}\\
\beta^\infty &:= \sup \left\{ \beta \geq 0 : \int_{|x|> 1} |x|^\beta \Pi(dx) < \infty \right\}.\label{betainf}
\end{align}
The index $\beta^0$ is known as the Blumenthal-Getoor index \cite{blumenthal1961sample} and we must have $\beta^0 \in [0,2]$ since $\Pi$ is the L\'evy measure. On the other hand, $\beta^\infty \in [0, \infty]$ is related to the tail behavior of the distribution of $X(1)$. In particular, $\E |X(1)|^q<\infty$ for $q<\beta^\infty$ and $\E |X(1)|^q=\infty$ for $q>\beta^\infty$.

\begin{lemma}\label{lemma:q<beta}
If $X$ and $\hX$ are L\'evy processes such that \eqref{e:LPlimit} holds, then for $0<q< (1/H) \wedge \beta^\infty$, $\{ a(t)^{-q} |X(ts)|^q \}$ is uniformly integrable for every $s\geq 0$ and hence
\begin{equation*}
a(t)^{-q} \E |X(ts)|^q \to \E |\hX(s)|^q, \quad \text{ as } t \to 0.
\end{equation*}
\end{lemma}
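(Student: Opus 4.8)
The plan is to derive the convergence of the $q$-th absolute moments from the weak convergence $a_{1/n}^{-1}X(t/n)\overset{d}{\to}\hX(t)$, which is part of \eqref{e:LPlimit}, together with uniform integrability of the family $\{a_{1/n}^{-q}|X(t/n)|^q\}_{n}$; once uniform integrability is in hand, the stated limit is the standard fact that weak convergence plus uniform integrability gives convergence of expectations (the case $t=0$ being trivial). To obtain the uniform integrability it suffices to fix some $q'\in(q,\,1/H\wedge\beta^\infty)$ — a nonempty interval precisely because $q<1/H\wedge\beta^\infty$ — and to prove
\[
\sup_{n}\; a_{1/n}^{-q'}\,\E|X(t/n)|^{q'}<\infty ,
\]
since then $\{a_{1/n}^{-q}|X(t/n)|^q\}_n$ is bounded in $L^{q'/q}$ with $q'/q>1$. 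Observe that $1/H\le 2$ in every case ($H=1/2$ for Brownian motion, $H=1/\alpha>1/2$ for a strictly $\alpha$-stable limit, $H=1$ for a linear drift), so throughout $q'<2$.

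For the uniform bound I would work with the rescaled L\'evy process $Y_n(s):=a_{1/n}^{-1}X(s/n)$, $s\ge0$, whose characteristic triplet $(\gamma_n,\sigma_n^2,\Pi_n)$ is the rescaling of $(\gamma,\sigma^2,\Pi)$, namely $\Pi_n(dy)=n^{-1}\Pi(a_{1/n}\,dy)$, $\sigma_n^2=n^{-1}a_{1/n}^{-2}\sigma^2$, and $\gamma_n$ the associated centre. By \cite[Thm.~2]{ivanovs2018zooming} — equivalently, from $\varepsilon\Psi(a_\varepsilon^{-1}\zeta)\to\hPsi(\zeta)$ and the standard criterion for convergence of infinitely divisible laws (see \cite{sato1999levy}) — these triplets converge to $(\widehat{\gamma},\widehat{\sigma}^2,\widehat{\Pi})$ in the usual sense; in particular $\sup_n|\gamma_n|<\infty$, $\sup_n\big(\sigma_n^2+\int_{|y|\le1}y^2\Pi_n(dy)\big)<\infty$ and $\sup_n\Pi_n(|y|>1)<\infty$. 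Decomposing $Y_n$ into Gaussian part, drift, compensated jumps of modulus $\le1$, and jumps of modulus $>1$, the first three contribute $\E|\cdot|^{q'}$ bounded by a constant (depending on $t$) times $\big(\sigma_n^2+\int_{|y|\le1}y^2\Pi_n(dy)\big)^{q'/2}+|\gamma_n|^{q'}$ — using subadditivity of $x\mapsto x^{q'}$ when $q'\le1$ and Minkowski's inequality together with $\|\cdot\|_{q'}\le\|\cdot\|_{2}$ when $1\le q'\le2$ — hence are uniformly bounded. The jumps-of-modulus-$>1$ part is compound Poisson with intensity $\Pi_n(|y|>1)$, and a direct computation (conditioning on the number of jumps, whose mean $t\,\Pi_n(|y|>1)$ is bounded, and whose higher moments are therefore also bounded) gives $\E|\cdot|^{q'}\le C\int_{|y|>1}|y|^{q'}\Pi_n(dy)$.

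It thus remains to show $\sup_n\int_{|y|>1}|y|^{q'}\Pi_n(dy)<\infty$. Writing this integral as $\big(na_{1/n}^{q'}\big)^{-1}\int_{|x|>a_{1/n}}|x|^{q'}\Pi(dx)$ and splitting at $|x|=1$, the $|x|>1$ part equals $\big(na_{1/n}^{q'}\big)^{-1}\int_{|x|>1}|x|^{q'}\Pi(dx)$, which tends to $0$ because $\int_{|x|>1}|x|^{q'}\Pi(dx)<\infty$ (as $q'<\beta^\infty$) while $na_{1/n}^{q'}\to\infty$ (since $a_{1/n}\in\RV^0(H)$ and $Hq'<1$). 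One is then left with the contribution of the intermediate jumps,
\[
\frac{1}{na_{1/n}^{q'}}\int_{a_{1/n}<|x|\le1}|x|^{q'}\Pi(dx),
\]
which is the heart of the matter and the step I expect to cost the most work. When $q'>\beta^0$ it is harmless: it is dominated by $\big(na_{1/n}^{q'}\big)^{-1}\int_{|x|\le1}|x|^{q'}\Pi(dx)\to0$. The delicate regime — which always occurs when $\hX$ is strictly $\alpha$-stable, since then $\beta^0=\alpha=1/H$ forces $q'<\beta^0$ — is $q'\le\beta^0$. Here I would exploit the precise structure of the domain of attraction provided by \cite[Thm.~2]{ivanovs2018zooming}: as $u\downarrow0$ the tail $\overline{\Pi}(u):=\Pi(|x|>u)$ and the truncated variance $V(u):=\int_{|x|\le u}x^2\Pi(dx)$ must behave regularly ($\overline{\Pi}$ regularly varying of index $-1/H$ in the stable case; $\overline{\Pi}(u)=o\big(V(u)/u^2\big)$ with $V$ slowly varying in the Brownian case; and a correspondingly constrained behaviour in the drift case), and a Potter-type bound then yields $\int_{a_{1/n}<|x|\le1}|x|^{q'}\Pi(dx)=O\big(a_{1/n}^{q'-2}V(a_{1/n})\big)$; dividing by $na_{1/n}^{q'}$ leaves $O\big(n^{-1}a_{1/n}^{-2}V(a_{1/n})\big)$, which is bounded because $n^{-1}a_{1/n}^{-2}\big(\sigma^2+V(a_{1/n})\big)=\sigma_n^2+\int_{|y|\le1}y^2\Pi_n(dy)$ converges, again by the triplet convergence. (As an alternative to this last block, the estimate $a_{1/n}^{-q'}\E|X(t/n)|^{q'}=O(1)$ can be read off from Pruitt's bounds for $\P(|X(s)|>r)$, or from the absolute-moment bounds of \cite{deng2015shift}, fed the same regular-variation input.) Carrying out this intermediate-jump estimate cleanly, and organizing the bookkeeping so that the Brownian, stable and drift cases are all covered, is the part of the argument I anticipate being most technical.
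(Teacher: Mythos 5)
Your proposal is correct in outline and its skeleton coincides with the paper's proof: reduce uniform integrability to boundedness of the normalized moments at some $q'\in(q,\,1/H\wedge\beta^\infty)$ (the paper does this implicitly, and your explicit choice of $q'$ is in fact cleaner, since $q'<\beta^\infty$ is genuinely needed for the big-jump part), pass to the rescaled process $a_{1/n}^{-1}X(\cdot/n)$ with triplet $(\gamma^{(n)},\sigma^{(n)},\Pi^{(n)})$, split off the drift, bound the Gaussian-plus-small-jump component through its second moment ($q'\le 2$, Jensen/Minkowski), and bound the compound Poisson big-jump component by conditioning on the Poisson count, everything reducing to uniform control of $\gamma^{(n)}$, $\sigma^{(n)}$, $\int_{|x|\le1}x^2\Pi^{(n)}(dx)$ and $\int_{|x|>1}|x|^{q'}\Pi^{(n)}(dx)$. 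The genuine difference is in how this last quantity is controlled: the paper simply imports it, citing \cite{bisewski2019zooming} (Lemma 3 there gives exactly \eqref{e:xpPIconv} together with convergence of the other triplet quantities), whereas you re-derive boundedness by hand, splitting $\int_{|x|>a_{1/n}}|x|^{q'}\Pi(dx)$ at $|x|=1$ and treating the intermediate jumps via regular variation of the tail/truncated variance from the domain-of-attraction conditions plus Karamata/Potter bounds. That route works, and you correctly identify it as the technical heart, but as sketched it does not literally cover all cases: your variance-based bound $\int_{a<|x|\le1}|x|^{q'}\Pi(dx)=O\bigl(a^{q'-2}V(a)\bigr)$ relies on (slow or regular) variation of $V$, which is available when $\sigma=0$ and $\hX$ is Brownian or stable, but not when $\sigma>0$ (where $\Pi$ is unconstrained; there the crude bound $|x|^{q'}\le a^{q'-2}x^2$ on $\{|x|>a\}$ together with $na_{1/n}^2\to\sigma^2/\widehat\sigma^2>0$ suffices) nor in the drift case (where the right input is bounded variation, $\int_{|x|\le1}|x|\Pi(dx)<\infty$, and the bound $|x|^{q'}\le a^{q'-1}|x|$ with $na_{1/n}\to|\gamma'|$ does the job). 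So what your approach buys is a self-contained argument from the explicit domain-of-attraction structure, at the cost of this case-by-case bookkeeping; what the paper's citation buys is a uniform one-line treatment of precisely that step.
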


\begin{proof}
It is enough to show that $\{ a(t)^{-q} \E |X(ts)|^q \}$ is bounded for arbitrary $0<q<(1/H) \wedge \beta^\infty$. Let $X^{(t)} (s)=a(t)^{-1} X(ts)$ and let $(\gamma^{(t)}, \sigma^{(t)}, \Pi^{(t)})$ denote the characteristic triplet of the L\'evy process $\{ X^{(t)} (s), \, s \geq 0 \}$. Decompose $X^{(t)}$ into independent factors
\begin{equation}\label{e:proofdecomposition}
X^{(t)}(s) = \gamma^{(t)} s + X_1^{(t)}(s) + X_2^{(t)} (s),
\end{equation}
where $X_1^{(t)}$ is a L\'evy process with triplet $(0,\sigma^{(t)}, \Pi_1^{(t)})$, $\Pi_1^{(t)} (dx) = \1_{\{|x|\leq 1\}} \Pi^{(t)}(dx)$, and $X_2^{(t)}$ is a L\'evy process with triplet $(0,0, \Pi_2^{(t)})$, $\Pi_2^{(t)} (dx) = \1_{\{|x|> 1\}} \Pi^{(t)}(dx)$. By the elementary inequality $|a+b|^r\leq 2^r(|a|^r+|b|^r)$ we have from \eqref{e:proofdecomposition}
\begin{equation*}
\E |X^{(t)} (s)|^q \leq C \left( |\gamma^{(t)} s|^q  + \E |X_1^{(t)} (s)|^q  + \E |X_2^{(t)} (s)|^q \right).
\end{equation*}
We now show each of these terms is bounded. 

By \eqref{e:psiconv} and \cite[Thm.~15.14]{kallenberg2002foundations}, $\gamma^{(t)}$, $\sigma^{(t)}$ and $\int_{|x|\leq 1} x^2 \Pi^{(t)} dx$ have finite limits as $t \to 0$ (see also \cite[Lem.~4.8]{bisewski2019zooming}, \cite[Prop.~4.1]{maller2008convergence}). In particular, $\{\gamma^{(t)}\}$ is bounded. Moreover, $\E |X_1^{(t)} (s)|^q$ is bounded. Indeed, since $q \leq 2$, by Jensen's inequality and \cite[Exa.~25.12]{sato1999levy} we have
\begin{equation*}
\E |X_1^{(t)} (s)|^q = \E \left( |X_1^{(t)} (s)|^2 \right)^{q/2} \leq \left( \E |X_1^{(t)} (s)|^2 \right)^{q/2} = \left( s \left(\sigma^{(t)}\right)^2 + s \int_{|x|\leq 1} x^2 \Pi^{(t)} (dx) \right)^{q/2}.
\end{equation*}
For $\E |X_2^{(t)} (s)|^q$ we proceed similarly as in the proof of \cite[Lem.~4.9]{bisewski2019zooming}. By \cite[Lem.~4.8]{bisewski2019zooming} it holds that
\begin{equation}\label{e:xpPIconv}
\int_{|x|>1} |x|^q \Pi^{(t)}(dx) \to \int_{|x|>1} |x|^q \widehat{\Pi}(dx) < \infty,
\end{equation}
Although the statement of \cite[Lem.~4.8]{bisewski2019zooming} is for $t=1/n$, the same proof is valid for general $t$. Note that $X_2^{(t)}$ is a compound Poisson process with intensity parameter $\lambda^{(t)}=\Pi^{(t)}(1,\infty)+\Pi^{(t)}(-\infty,-1)$ and jump distribution $\1_{\{|x|> 1\}} \Pi^{(t)}(dx)/\lambda^{(t)}$. If $N^{(t)}(s)$ is Poisson random variable with parameter $s\lambda^{(t)}$ and $\xi^{(t)}$ a generic jump, then by conditioning on the number of jumps and using Minkowski inequality we get for $q \geq 1$
\begin{align*}
\E |X_2^{(t)} (s)|^q &\leq \E (N^{(t)}(s))^q \E  |\xi^{(t)}|^q \leq  \E (N^{(t)}(s))^2 \frac{1}{\lambda^{(t)}} \int_{|x|>1} |x|^q \Pi^{(t)}(dx)\\
&=\left( s + \lambda^{(t)} s^2 \right) \int_{|x|>1} |x|^q \Pi^{(t)}(dx),
\end{align*}
and this is bounded since $\lambda^{(t)}$ is bounded and \eqref{e:xpPIconv} holds. For $q<1$ we use the inequality $(a+b)^q\leq a^q+b^q$, $a,b\geq 0$ and get
\begin{equation*}
\E |X_2^{(t)} (s)|^q \leq \E N^{(t)}(s) \E  |\xi^{(t)}|^q =s \int_{|x|>1} |x|^q \Pi^{(t)}(dx),
\end{equation*}
which is bounded by \eqref{e:xpPIconv}.
\end{proof}

Following \cite[Cor.~1]{ivanovs2018zooming}, we have that $\beta^0=1/H$ unless $\sigma>0$ or $X$ is bounded variation with $\gamma'\neq 0$. Lemma \ref{lemma:q<beta} covers the moment asymptotics for $q<\beta^0 \wedge \beta^\infty$. For $\beta^0<q<\beta^\infty$ we have the following lemma which is based on the results of \cite{figueroa2008small} (see also \cite{woerner2003variational}).

\begin{lemma}\label{lemma:q>beta}
Let $X$ be a L\'evy process with characteristic triplet $(\gamma, \sigma, \Pi)$ such that $\beta^0<\beta^\infty$ and let $I=(\beta^0, \beta^\infty)$, where $\beta^0$ and $\beta^\infty$ are given by \eqref{beta0} and \eqref{betainf}.
\begin{enumerate}[(i)]
\item If $\sigma=0$ and $\Pi \equiv 0$, then as $t \to 0$
\begin{equation*}
t^{-q} \E |X(ts)|^q \to s^q |\gamma|^q, \quad \text{ for } q\in I.
\end{equation*}
\item If $\sigma\neq 0$ and $\Pi \equiv 0$, then as $t \to 0$
\begin{equation*}
t^{-\frac{q}{2}} \E |X(ts)|^q \to s^{\frac{q}{2}} \E | \mathcal{N}(0,\sigma^2)|^q, \quad \text{ for } q\in I,
\end{equation*}
where $\E | \mathcal{N}(0,\sigma^2)|^q$ is the $q$-th absolute moment of normal distribution with mean $0$ and variance $\sigma^2$.
\item If $\sigma = 0$ and $\Pi \not\equiv 0$, then as $t \to 0$
\begin{equation}\label{s0Pinot0case}
t^{-1} \E |X(ts)|^q \to s \int_\R |x|^q \Pi(dx), \quad \text{ for } q\in I \cap (1,\infty).
\end{equation}
In case $\beta^0<1$, if $\gamma'=0$ in \eqref{LKformulabv}, then \eqref{s0Pinot0case} holds for $q\leq 1$ also and if $\gamma'\neq 0$, then as $t \to 0$
\begin{equation}\label{s0Pinot0case2}
t^{-q} \E |X(ts)|^q \to s^q |\gamma'|^q, \quad \text{ for } q\in I \cap (0,1).
\end{equation}
\item If $\sigma \neq 0$ and $\Pi \not\equiv 0$, then as $t \to 0$
\begin{equation*}
\begin{cases}
t^{-1} \E |X(ts)|^q \to s \int_\R |x|^q \Pi(dx),& \quad \text{ for } q\in I\cap (2,\infty),\\
t^{-1} \E |X(ts)|^q \to s \sigma^2  + s \int_\R  |x|^2 \Pi(dx),& \quad \text{ for } q=2 \text{ and } q\in I,\\
t^{-\frac{q}{2}} \E |X(ts)|^q \to s^{\frac{q}{2}} \E | \mathcal{N}(0,\sigma^2)|^q,& \quad \text{ for } q \in I\cap (0,2).
\end{cases}
\end{equation*} 
\end{enumerate}
\end{lemma}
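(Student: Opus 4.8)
The plan is to treat the four cases separately: (i) and (ii) by direct computation, the $\int_\R|x|^q\Pi(dx)$-type limits in (iii) and (iv) by invoking the small-time moment asymptotics of \cite{figueroa2008small}, the drift-dominated sub-case of (iii) by an elementary inequality, and the Gaussian-dominated sub-case of (iv) by a perturbation argument that reduces it to Lemma~\ref{lemma:q<beta}. Note first that the substitution $h=t/n$ turns any statement $h^{-1}\E|X(h)|^q\to L$ into $n\E|X(t/n)|^q\to tL$, and similarly for the other normalizations, so it suffices to get the small-$h$ behavior of $\E|X(h)|^q$. For (i), $X(s)=\gamma s$ is deterministic, so $n^q\E|X(t/n)|^q\equiv|\gamma|^q t^q$. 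For (ii), $X(s)\overset{d}{=}\gamma s+\sigma W(s)$ for a standard Brownian motion $W$, so $n^{1/2}X(t/n)\overset{d}{=}\gamma tn^{-1/2}+\sigma\sqrt t\,W(1)\to\sigma\sqrt t\,W(1)$ a.s.; since $\sup_n\E|n^{1/2}X(t/n)|^{q+1}<\infty$, the family $\{|n^{1/2}X(t/n)|^q\}_n$ is uniformly integrable and $n^{q/2}\E|X(t/n)|^q\to t^{q/2}\E|\mathcal N(0,\sigma^2)|^q$.

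For (iii) the key input I would take from \cite{figueroa2008small} (see also \cite{woerner2003variational}) is that a L\'evy process with $\sigma=0$ and $\beta^0<q<\beta^\infty$ satisfies $h^{-1}\E|X(h)|^q\to\int_\R|x|^q\Pi(dx)$ as $h\downarrow0$, provided either $q>1$, or $q\le1$ and the drift $\gamma'$ in \eqref{LKformulabv} vanishes; this gives \eqref{s0Pinot0case} for $q\in I\cap(1,\infty)$ and its stated extension to $q\le1$ when $\gamma'=0$. For the remaining sub-case $\beta^0<q<1$ with $\gamma'\neq0$ I would write $X(h)=\gamma'h+P(h)$, with $P$ the driftless pure-jump part of triplet $(0,0,\Pi)$ and the same index $\beta^0$; the quoted fact applied to $P$ gives $\E|P(h)|^q=O(h)$, and since $q\le1$ the inequality $\bigl|\,|u+v|^q-|u|^q\,\bigr|\le|v|^q$ yields $\bigl|\,\E|X(h)|^q-|\gamma'|^q h^q\,\bigr|\le\E|P(h)|^q=O(h)=o(h^q)$, which is \eqref{s0Pinot0case2}.

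For (iv), when $q\in I\cap(2,\infty)$ I would write $X=R+\sigma W$ with $R$ of triplet $(\gamma,0,\Pi)$: the $L^q$ triangle inequality gives $\bigl|\,\|X(h)\|_q-\|R(h)\|_q\,\bigr|\le\|\sigma W(h)\|_q=O(h^{1/2})$, which is $o(\|R(h)\|_q)$ because $\|R(h)\|_q\asymp h^{1/q}$ by the $\sigma=0$ case applied to $R$ and $1/q<1/2$; hence $h^{-1}\E|X(h)|^q\to\int_\R|x|^q\Pi(dx)$, the first line. When $q=2$ (which forces $\beta^\infty>2$, hence $\E X(1)$ finite) I would compute directly $\E|X(h)|^2=\Var X(h)+(\E X(h))^2=h\bigl(\sigma^2+\int_\R x^2\Pi(dx)\bigr)+h^2(\E X(1))^2$, the second line. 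When $q\in I\cap(0,2)$ I would set $R=X-\sigma W$, independent of $W$, and first show $R(h)/\sqrt h\overset{P}{\to}0$: splitting $\Pi$ at level $\varepsilon$, the compound-Poisson part of jumps of modulus $>\varepsilon$ is $0$ outside an event of probability $O(h)$ and so vanishes in probability, the compensated small-jump part has variance $h\int_{|x|\le\varepsilon}x^2\Pi(dx)$ so that $R^{\mathrm{small}}(h)/\sqrt h$ has variance $\int_{|x|\le\varepsilon}x^2\Pi(dx)$, small uniformly in $h$, and the leftover drift scaled by $h^{-1/2}$ tends to $0$. Hence $X(\varepsilon t)/\sqrt\varepsilon\overset{fdd}{\to}\sigma W$, so \eqref{e:LPlimit} holds with $a_\varepsilon=\sqrt\varepsilon$ and $\hX=\sigma W$; comparing with the standing hypothesis via the convergence-of-types theorem, the given norming sequence satisfies $a_\varepsilon\sim c\sqrt\varepsilon$ with $\widehat\sigma=\sigma/c$ for some $c>0$. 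Since $H=1/2$ here we have $I\cap(0,2)\subset(0,1/H\wedge\beta^\infty)$, so Lemma~\ref{lemma:q<beta} applies; substituting $a_{1/n}^{-q}\sim c^{-q}n^{q/2}$ and $\E|\hX(t)|^q=(\sigma/c)^q t^{q/2}\E|\mathcal N(0,1)|^q$ into its conclusion, the factors $c$ cancel and the third line follows.

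The step I expect to be the main obstacle is the convergence $h^{-1}\E|X(h)|^q\to\int_\R|x|^q\Pi(dx)$ for $1<q\le2$ (and for $q\le1$ with $\gamma'=0$), imported above from \cite{figueroa2008small}: in this range the $q$-th moment is governed not by the typical fluctuations of $X(h)$ (which are of smaller order and contribute only $o(h)$) but by a single large jump, and making this rigorous needs a sharp bound on the compensated small-jump martingale $M$, for which the naive estimate $\E|M(h)|^q\le(\E|M(h)|^2)^{q/2}$ has the wrong order in $h$ when $q<2$.
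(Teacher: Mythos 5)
Your proposal is correct, and it follows the paper's overall skeleton (same case split, same import of \cite[Thm.~1.1]{figueroa2008small} for the jump-dominated regimes, same trivial treatment of (i)--(ii)), but several sub-steps are handled by genuinely different means. For the drift sub-case \eqref{s0Pinot0case2} of (iii), the paper writes $X(t)=\gamma' t+X_2(t)$ and argues via $n^q\E|X_2(t/n)|^q\to 0$, convergence in probability and uniform integrability of $|\gamma' t+nX_2(t/n)|^q$; your pointwise inequality $\bigl||u+v|^q-|u|^q\bigr|\le |v|^q$ for $q\le 1$ gives the same conclusion more directly and avoids the UI argument. For (iv) with $q\ge 2$ the paper simply cites \cite[Thm.~1.1]{figueroa2008small}, which also covers $\sigma\neq 0$ in that range, whereas you rederive the $q>2$ line from the $\sigma=0$ case via the $L^q$ triangle inequality (using $h^{1/2}=o(h^{1/q})$, which is exactly why $q>2$ is needed) and the $q=2$ line by the exact variance identity; this buys a more self-contained proof at the cost of assuming the external theorem only for $\sigma=0$. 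For (iv) with $q\in I\cap(0,2)$ the paper decomposes $X=X_1+X_2$ and transfers uniform integrability using case (iii) applied to $X_2$, while you prove the Gaussian zooming-in limit $X(\varepsilon t)/\sqrt{\varepsilon}\overset{fdd}{\to}\sigma W$ by splitting the jumps at level $\varepsilon$ and then invoke Lemma \ref{lemma:q<beta}; this is sound and not circular, since Lemma \ref{lemma:q<beta} is proved independently. One small correction of framing: Lemma \ref{lemma:q>beta} is stated for an arbitrary L\'evy process and carries no standing hypothesis \eqref{e:LPlimit}, so your ``comparison with the given norming sequence via convergence of types'' is superfluous --- you may simply take $a_\varepsilon=\sqrt{\varepsilon}$ and $\hX=\sigma W$ in Lemma \ref{lemma:q<beta}, after which the third line of (iv) follows exactly as you say; this does not affect the validity of the argument.
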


\begin{proof}
\begin{enumerate}[(i)]
\item is obvious.

\item If we decompose $X$ as $X(t)=\gamma t + X_1(t)$ where $X_1$ is Brownian motion, then by self-similarity
\begin{equation*}
t^{-\frac{q}{2}} \E |X(ts)|^q = \E |\gamma s t^{-\frac{1}{2}+1} + s^{\frac{1}{2}} X_1(1)|^q \to s^{\frac{q}{2}} \E | \mathcal{N}(0,\sigma^2)|^q.
\end{equation*}

\item That \eqref{s0Pinot0case} holds for $q>1$ and for $q\leq 1$ if $\gamma'=0$, follows from \cite[Thm.~1.1]{figueroa2008small}. If $\gamma'\neq 0$, we decompose $X$ as $X(t)=\gamma' t + X_2(t)$, where $X_2$ is a L\'evy process with triplet $(\gamma-\gamma',0,\Pi)$. Note that the drift term of $X_2$ is $0$ and by the previous case we have that $t^{-1} \E |X_2(ts)|^q \to s \int_\R |x|^q \Pi(dx)$ and hence $t^{-q} \E |X_2(ts)|^q \to 0$ for $q<1$. It follows that $|t^{-1} X_2(ts)|^q \to^P 0$ and $\{|t^{-1} X_2(ts)|^q\}$ is uniformly integrable, but then so is $|\gamma' t + n X_2(t/n)|^q$ which gives \eqref{s0Pinot0case2}.

\item The cases $q>2$ and $q=2$ follow from \cite[Thm.~1.1]{figueroa2008small}. For $q<2$ we decompose $X$ into independent components as $X(t)=X_1(t)+X_2(t)$ where $X_1$ is Brownian motion and $X_2$ is L\'evy process with triplet $(\gamma,0,\Pi)$. We have that $t^{-\frac{1}{2}} X_1(ts) = s^{\frac{1}{2}} X_1(1)$ and $t^{-\frac{q}{2}} \E |X_2(ts)|^q \to 0$, by self-similarity and case (iii), respectively. Hence, $|t^{-\frac{1}{2}} X_2(ts)|^q \to^P 0$ and $\{|t^{-\frac{1}{2}} X_2(ts)|^q\}$ is uniformly integrable and so is $|t^{-\frac{1}{2}}X_1(ts) + t^{-\frac{1}{2}} X_2(ts)|^q$ which completes the proof.
\end{enumerate}
\end{proof}

By combining the results of Lemmas \ref{lemma:q<beta} and \ref{lemma:q>beta} we get the scaling function of the L\'evy process satisfying \eqref{e:LPlimit}.

\begin{theorem}\label{thm:tau0}
Suppose that $X$ and $\hX$ are L\'evy processes such that \eqref{e:LPlimit} holds, let  $(\gamma,\sigma,\Pi)$ denote the characteristic triplet of $X$ and  $\tau^0$ its scaling function \eqref{deftau0}.
\begin{enumerate}[(i)]
\item Suppose that $\hX$ is Brownian motion. If $\Pi\not \equiv 0$, then for $q \in (0,\beta^\infty)$
\begin{equation*}
\tau^0(q)=\begin{cases}
-\frac{1}{2} q, & \ \text{ if } 0<q\leq 2,\\
-1, & \ \text{ if } q>2.
\end{cases}
\end{equation*}
If $\Pi \equiv 0$, then $\tau^0(q)=-q/2$ for $q \in (0,\beta^\infty)$.

\item Suppose that $\hX$ is linear drift. If $\Pi\not \equiv 0$, then for $q \in (0,\beta^\infty)$
\begin{equation*}
\tau^0(q)=\begin{cases}
-q, & \ \text{ if } 0<q\leq 1,\\
-1, & \ \text{ if } q>1.
\end{cases}
\end{equation*}
If $\Pi \equiv 0$, then $\tau^0(q)=-q$ for $q \in (0,\beta^\infty)$.

\item If $\hX$ is strictly stable L\'evy process, then for $q \in (0,\beta^\infty)$
\begin{equation*}
\tau^0(q)=\begin{cases}
-\frac{1}{\beta^0} q, & \ \text{ if } 0<q\leq \beta^0,\\
-1, & \ \text{ if } q > \beta^0.
\end{cases}
\end{equation*}
\end{enumerate}
\end{theorem}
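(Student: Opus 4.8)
The strategy is uniform across the three cases. Below the critical order $q=1/H$ the normalized moments converge by Lemma~\ref{lemma:q<beta}, which pins down $\tau^0(q)=-Hq$; above $q=1/H$ Lemma~\ref{lemma:q>beta} shows that $n\,\E|X(t/n)|^q$ tends to a strictly positive finite constant, which forces $\tau^0(q)=-1$; and the single boundary order $q=1/H$ is recovered from convexity and hence continuity of $\tau^0$ (Proposition~\ref{propertiesoftau0}(i)), noting that $-H\cdot(1/H)=-1$ so the two pieces match. The three cases differ only in the value of $H$ ($1/2$, $1$, $1/\alpha$ respectively) and in the bookkeeping needed to apply Lemma~\ref{lemma:q>beta}.

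First I would fix $H$ and relate it to the characteristic triplet. Since $\hX$ is a non-trivial self-similar L\'evy process, $a_\varepsilon\in\RV^0(H)$ and $\log a_{1/n}/\log n\to -H$, with $H=1/2$ if $\hX$ is Brownian motion, $H=1$ if $\hX$ is a linear drift, and $H=1/\alpha$ if $\hX$ is strictly $\alpha$-stable. By \cite[Cor.~1]{ivanovs2018zooming}, $\beta^0=1/H$ unless $\sigma>0$ or $X$ has bounded variation with non-zero drift $\gamma'$; moreover $\sigma>0$ forces $\hX$ to be Brownian motion and bounded variation with $\gamma'\ne0$ forces $\hX$ to be a linear drift. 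Hence: if $\hX$ is strictly $\alpha$-stable then $\sigma=0$, $\Pi\not\equiv0$, $\beta^0=\alpha=1/H$, and $X$ is not of bounded variation with non-zero drift, so if $\beta^0<1$ then $X$ is of bounded variation with $\gamma'=0$; if $\hX$ is a linear drift with $\Pi\not\equiv0$ then $\sigma=0$, $\gamma'\ne0$, $\beta^0\le1=1/H$; if $\hX$ is Brownian motion with $\Pi\not\equiv0$ then $\sigma>0$, or $\sigma=0$ and $\beta^0=2=1/H$. Finally, when $\Pi\equiv0$ the process $X$ is Gaussian-plus-drift, all its moments are finite, and Lemma~\ref{lemma:q>beta}(i) (for $\hX$ a linear drift, $\sigma=0$) or (ii) (for $\hX$ Brownian motion, $\sigma\ne0$) applies for every $q>0$ and gives the stated $\tau^0(q)=-q$, resp.\ $\tau^0(q)=-q/2$, the limiting constants $t^q|\gamma|^q$ and $t^{q/2}\E|\mathcal N(0,\sigma^2)|^q$ being strictly positive.

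Now assume $\Pi\not\equiv0$. For $0<q<1/H\wedge\beta^\infty$, Lemma~\ref{lemma:q<beta} gives $a_{1/n}^{-q}\,\E|X(t/n)|^q\to\E|\hX(t)|^q$, which is finite and strictly positive because $\hX$ is non-trivial and $q$-th moments of a self-similar L\'evy process are finite in this range (using $q<\alpha$ when $\hX$ is stable); therefore $\log\E|X(1/n)|^q=q\log a_{1/n}+O(1)$ and $\tau^0(q)=-Hq$. If $1/H<\beta^\infty$, then for $q\in(1/H,\beta^\infty)$ I would apply Lemma~\ref{lemma:q>beta}(iii) when $\sigma=0$ and (iv) when $\sigma\ne0$; using the identifications above --- in particular $\beta^0\le 1/H<q<\beta^\infty$ and the exclusion of the bounded-variation-with-nonzero-drift branch \eqref{s0Pinot0case2} --- one verifies that $q$ lies in the subinterval for which these lemmas give $n\,\E|X(t/n)|^q\to t\int_\R|x|^q\,\Pi(dx)$, or, for $q=2$ in case (iv), the variant with an extra $t\sigma^2$; this constant is finite since $\beta^0<q<\beta^\infty$ and strictly positive since $\Pi\not\equiv0$, so $\tau^0(q)=-1$ on $(1/H,\beta^\infty)$. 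For the boundary order: if $1/H<\beta^\infty$ then $\tau^0$ is finite in a neighbourhood of $1/H$, hence $1/H\in\Int(\mathcal D_{\tau^0})$ and $\tau^0$ is continuous there by convexity, so $\tau^0(1/H)=\lim_{q\to1/H}\tau^0(q)=-1$; if $1/H\ge\beta^\infty$ the domain $(0,\beta^\infty)$ is contained in $(0,1/H]$ and only the first computation is needed.

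The analytic content is entirely supplied by Lemmas~\ref{lemma:q<beta}--\ref{lemma:q>beta}, Proposition~\ref{propertiesoftau0} and Ivanovs' characterization, so the only real difficulty is the case bookkeeping: correctly matching the type of $\hX$ to $(\sigma,\Pi,\gamma')$ and to the value of $\beta^0$, and then checking that the given $q$ falls into precisely the right subinterval of Lemma~\ref{lemma:q>beta}. One must also keep track of strict positivity of every limiting constant, so that its logarithm contributes only $o(\log n)$; this is where $\Pi\not\equiv0$, $\gamma\ne0$, $\gamma'\ne0$ and non-triviality of $\hX$ are used.
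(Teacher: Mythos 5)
Your proposal is correct and follows essentially the same route as the paper: Lemma~\ref{lemma:q<beta} (via Proposition~\ref{prop:limitLamp}) for $q<1/H\wedge\beta^\infty$, Lemma~\ref{lemma:q>beta} with Ivanovs' characterization of the triplet for $q>1/H$, and convexity/continuity of $\tau^0$ at the boundary order. Your extra bookkeeping (making $\beta^0\le 1/H$ and the strict positivity of the limiting constants explicit) is only a more detailed spelling-out of the paper's argument, not a different approach.
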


\begin{proof}
From Proposition \ref{prop:limitLamp} (i) we have that $\tau^0(q)=-Hq$ for $0<q< (1/H) \wedge \beta^\infty$.
\begin{enumerate}[(i)]
\item By Lemma \ref{lemma:q<beta},  $\tau^0(q)=-q/2$ for $0<q<2$. If $\Pi\not \equiv 0$, then by Lemma \ref{lemma:q>beta} (iii) and (iv) we have $\tau^0(q)=-1$ for $q\geq 2$. If $\Pi \equiv 0$, then we must have $\sigma\neq 0$ by \cite[Thm.~2]{ivanovs2018zooming}. From Lemma \ref{lemma:q>beta} (ii) then $\tau^0(q)=-q/2$ for $q\geq 2$.

\item By Lemma \ref{lemma:q<beta},  $\tau^0(q)=-q$ for $0<q<1$. By \cite[Thm.~2]{ivanovs2018zooming} we must have $\sigma=0$. For $q>1$ we use Lemma \ref{lemma:q>beta} (iii) if $\Pi\not \equiv 0$ and Lemma \ref{lemma:q>beta} (i) if $\Pi\equiv 0$. That $\tau^0(q)=-1$ follows by continuity of $\tau^0$ since $\tau^0$ is convex.

\item In this case we must have $\sigma=0$, $\Pi\not \equiv 0$ and in the bounded variation case $\gamma'=0$ (see \cite[Thm.~2]{ivanovs2018zooming}) and also $\beta^0=1/H$ by \cite[Cor.~1]{ivanovs2018zooming}. From Lemma \ref{lemma:q<beta} it follows that $\tau^0(q)=-q/\beta^0$ for $0<q<\beta^0$ and by Lemma \ref{lemma:q>beta} (iii) $\tau^0(q)=-1$ for $q>\beta^0$. By continuity of $\tau^0$ then $\tau^0(\beta^0)=-1$.
\end{enumerate}
\end{proof}

The scaling functions obtained in Theorem \ref{thm:tau0} are plotted in Figure \ref{fig:tau0}. If under the assumptions of Theorem \ref{thm:tau0} we put
\begin{equation*}
\alpha = \begin{cases}
2, & \ \text{ if } \hX \text{ is Brownian motion},\\
1, & \ \text{ if } \hX \text{ is linear drift},\\
\beta^0, & \ \text{ if } \hX \text{ is strictly stable process},\\
\end{cases}
\end{equation*}
then, when $\Pi\not \equiv 0$, $\tau^0$ can be written in a unified way for $q \in (0,\beta^\infty)$ as
\begin{equation*}
\tau^0(q)=\begin{cases}
-\frac{1}{\alpha} q, & \ \text{ if } 0<q\leq \alpha,\\
-1, & \ \text{ if } q>\alpha.
\end{cases}
\end{equation*}
Provided that the range of finite moments is large enough, the L\'evy process exhibits intermittency in small-time as soon as $\Pi\not \equiv 0$. Namely, we must have $\beta^\infty>\alpha$ to cover the non-linearity of $\tau^0$ so that
\begin{equation*}
\frac{\tau^0(q)}{q} = \begin{cases}
-\frac{1}{\alpha}, & \ \text{ if } 0<q\leq \alpha,\\
-\frac{1}{q}, & \ \text{ if } \alpha < q < \beta^\infty,
\end{cases}
\end{equation*}
is strictly increasing on $(\alpha,\beta^\infty)$. If $\beta^\infty<\alpha$, in particular, the variance of $X(1)$ is infinite, then $\tau^0$ is linear in the range of finite moments and there is no intermittency.

We note that Theorem \ref{thm:tau0} (and Lemmas \ref{lemma:q<beta} and \ref{lemma:q>beta}) establish the moment asymptotics for \textit{every} finite absolute moment of any L\'evy process satisfying \eqref{e:LPlimit}. The assumption \eqref{e:LPlimit} is very weak and is satisfied for almost every L\'evy process of practical interest. The most notable exceptions are driftless processes with no Gaussian component such that $\overline{\Pi}(x)=\Pi((x,\infty))+\Pi((-\infty,-x)) \in \RV^0(0)$. Particular examples of such processes are driftless compound Poisson process, driftless gamma and driftless variance gamma process. For more details and examples see \cite{bisewski2019zooming}.

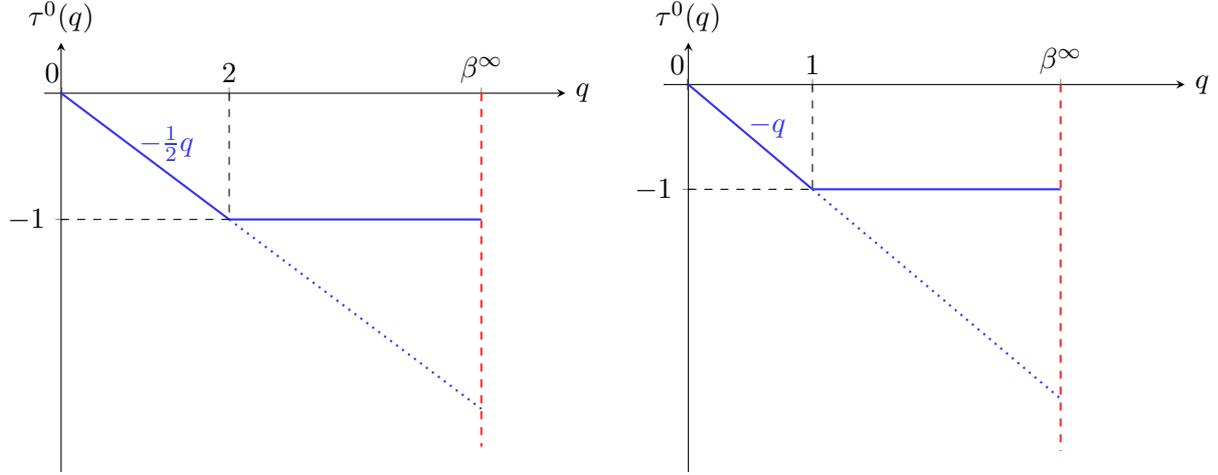
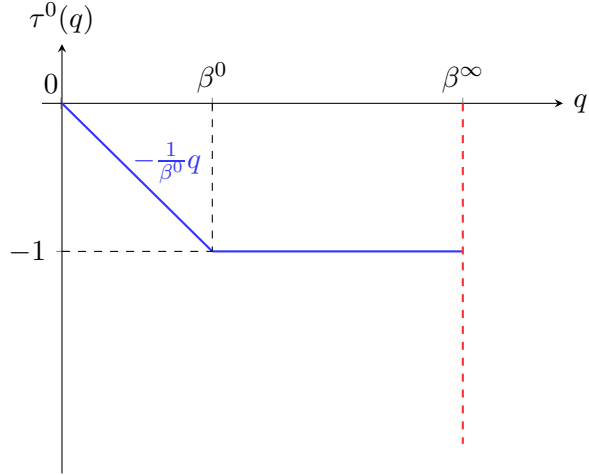
\begin{figure}[ht!]
\centering
\begin{subfigure}[b]{0.45\textwidth}
\begin{tikzpicture}[domain=0:6]
\begin{axis}[
axis lines=middle,
xlabel=$q$, xlabel style={at=(current axis.right of origin), anchor=west},
ylabel=$\tau^0(q)$, ylabel style={at=(current axis.above origin), anchor=south},
xtick={0.01,2,5},
xticklabel style={yshift=0.5ex, anchor=south},
xticklabels={$0\ \ $,$2$,$\beta^\infty$},
ytick={-1},
yticklabels={$-1$},
xmin=-0.2,
xmax=6,
ymin=-3,
ymax=0.4,
]
\addplot[thick,white!20!blue,domain=0:2]{-0.5*x} node [pos=0.4,right]{$-\frac{1}{2}q$};
\addplot[thick,white!20!blue,domain=2:5]{-1};
\addplot[dotted,white!20!blue,thick] coordinates {(2,-1) (5,-2.5)};
\addplot[dashed] coordinates {(2,-0) (2,-1)};
\addplot[dashed] coordinates {(0,-1) (2,-1)};
\addplot[thick,dashed,white!20!red] coordinates {(5,-0) (5,-2.8)};
\end{axis}
\end{tikzpicture}
\caption{Case (i) of Theorem \ref{thm:tau0} when $\Pi\not \equiv 0$ (solid) and $\Pi \equiv 0$ (dotted)}
\label{fig:tau0i}
\end{subfigure}
\hfill
\begin{subfigure}[b]{0.45\textwidth}
\begin{tikzpicture}[domain=0:6]
\begin{axis}[
axis lines=middle,
xlabel=$q$, xlabel style={at=(current axis.right of origin), anchor=west},
ylabel=$\tau^0(q)$, ylabel style={at=(current axis.above origin), anchor=south},
xtick={-0.01,1,3},
xticklabel style={yshift=0.5ex, anchor=south},
xticklabels={$0\ \ $,$1$,$\beta^\infty$},
ytick={-1},
yticklabels={$-1$},
xmin=-0.2,
xmax=4,
ymin=-3.7,
ymax=0.4,
]
\addplot[thick,white!20!blue,domain=0:1]{-x} node [pos=0.4,right]{$-q$};
\addplot[thick,white!20!blue,domain=1:3]{-1};
\addplot[dotted,white!20!blue,thick] coordinates {(1,-1) (3,-3)};
\addplot[dashed] coordinates {(1,-0) (1,-1)};
\addplot[dashed] coordinates {(0,-1) (1,-1)};
\addplot[thick,dashed,white!20!red] coordinates {(3,0) (3,-3.5)};
\end{axis}
\end{tikzpicture}
\caption{Case (ii) of Theorem \ref{thm:tau0} when $\Pi\not \equiv 0$ (solid) and $\Pi \equiv 0$ (dotted)}
\label{fig:tau0i}
\end{subfigure}
\hfill\\
\begin{subfigure}[b]{0.45\textwidth}
\begin{tikzpicture}[domain=0:6]
\begin{axis}[
axis lines=middle,
xlabel=$q$, xlabel style={at=(current axis.right of origin), anchor=west},
ylabel=$\tau^0(q)$, ylabel style={at=(current axis.above origin), anchor=south},
xtick={-0.01,1.5,4},
xticklabel style={yshift=0.5ex, anchor=south},
xticklabels={$0\ \ $,$\beta^0$,$\beta^\infty$},
ytick={-1},
yticklabels={$-1$},
xmin=-0.2,
xmax=5,
ymin=-2.5,
ymax=0.4,
]
\addplot[thick,white!20!blue,domain=0:1.5]{-x/1.5} node [pos=0.4,right]{$-\frac{1}{\beta^0}q$};
\addplot[thick,white!20!blue,domain=1.5:4]{-1};
\addplot[dashed] coordinates {(1.5,0) (1.5,-1)};
\addplot[dashed] coordinates {(0,-1) (1.5,-1)};
\addplot[thick,dashed,white!20!red] coordinates {(4,0) (4,-2.3)};
\end{axis}
\end{tikzpicture}
\caption{Case (iii) of Theorem \ref{thm:tau0}}
\label{fig:tau0i}
\end{subfigure}
\caption{The scaling function \eqref{deftau0} of L\'evy process $X$ in Theorem \ref{thm:tau0}}
\label{fig:tau0}
\end{figure}

\section{A note on the multifractal formalism}\label{sec5}
The multifractal formalism first appeared in the field of turbulence theory (\cite{frisch1985fully,frisch1995turbulence}) and relates local and global scaling properties of an object. For stochastic processes one may describe local regularity by roughness of the process sample paths measured by the pointwise H\"older exponents. Let $t_0 \in [0,\infty)$ and $\gamma>0$. We say that a function $f: [0,\infty) \to \mathbb{R}$ is $C^{\gamma}(t_0)$ if there exists a constant $C>0$ and polynomial $P_{t_0}$ of degree at most $\lfloor \gamma \rfloor$ such that for all $t$ in some neighborhood of $t_0$ it holds that $|f(t)-P_{t_0}(t)| \leq C |t - t_0|^{\gamma}$. If the Taylor polynomial of this degree exists, then $P_{t_0}$ is that Taylor polynomial. Thus, if $P_{t_0}$ is constant, then $P_{t_0} \equiv f(t_0)$. This happens, in particular, when $\gamma<1$ (see \cite{riedi2003multifractal}). 
It is clear that if $f \in C^{\gamma}(t_0)$, then $f \in C^{\gamma'}(t_0)$ for each $0<\gamma'<\gamma$. A pointwise H\"older exponent of the function $f$ at $t_0$ is $H(t_0)= \sup \left\{ \gamma : f \in C^{\gamma}(t_0) \right\}$.

In the multifractal analysis one analyzes the sets $S_h=\{ t : H(t)=h \}$ consisting of the points in the domain where $f$ has the H\"older exponent of value $h$. In the interesting examples, sets $S_h$ are fractal and one typically uses Hausdorff dimension $\dim_H S_h$ of these sets to measure their size. The mapping $h \mapsto d(h)=\dim_H S_h$ is called the spectrum of singularities (also multifractal or Hausdorff spectrum) with the convention that $\dim_H \emptyset=-\infty$. The set of $h$ such that $d(h)\neq - \infty$ is referred to as the support of the spectrum. A function $f$ is said to be multifractal if the support of its spectrum is non-trivial, in the sense that it is not a one point set. 

When considered for a stochastic process, H\"older exponents are random variables and $S_h$ random sets. However, in many cases the spectrum is deterministic, that is, for almost every sample path spectrum is the same. Moreover, the spectrum is usually homogeneous, in the sense it is the same when considered over any nonempty subset $A\subset [0,\infty)$. 

It is well-known that the sample paths of Brownian motion are monofractal, i.e.~$d(1/2)=1$ and $d(h)=-\infty$ for $h\neq 1/2$. However, other L\'evy processes typically have multifractal paths \cite{jaffard1999levy,balanca2013}. Namely, if there is no Gaussian component and $\beta^0>0$, then the spectrum of singularities is a.s.
\begin{equation}\label{specLP}
d(h) =
\begin{cases}
\beta^0 h, & \text{if } \ h \in [0, 1/\beta^0],\\
-\infty, & \text{if } \ h > 1/\beta^0.
\end{cases}
\end{equation}

Multifractal formalism states that the singularity spectrum of the function or a process can be obtained from some function $\zeta$ by computing
\begin{equation}\label{formalism}
d(h)= \inf_q \left( hq - \zeta(q) +1\right).
\end{equation}
Here $\zeta$ quantifies some global property of the process and its definition depends on the context. In \cite{frisch1985fully}, $\zeta$ is defined as the asymptotic scaling of moments of increments which would correspond to taking $\zeta(q) = \lim_{t\to 0} \frac{\log \E |X(t)|^q}{\log t}$. In the theory of multifractals processes such $\zeta$ is referred to as the scaling function and describes the asymptotic scale invariance of moments (see e.g.~\cite{muzy2002multifractal,muzy2013random,allez2013lognormal,rhodes2014levy}). In scenario (iii) of Theorem \ref{thm:tau0} we would have that
\begin{equation*}
\zeta(q)=\begin{cases}
\frac{1}{\beta^0} q, & \ \text{ if } 0<q\leq \beta^0,\\
1, & \ \text{ if } \beta^0 < q < \beta^\infty.
\end{cases}
\end{equation*}
It is easy to check then that the multifractal formalism \eqref{formalism} holds for L\'evy processes, at least when there is no Brownian component. It remains an open problem whether it is possible to derive \eqref{specLP} directly from Theorem \ref{thm:tau0}.

\bigskip

\bibliographystyle{agsm}
\bibliography{References}

\end{document}